\documentclass[12pt]{amsart}

\usepackage{
amsfonts,
latexsym,
amssymb,
}

\usepackage[abbrev,lite,
shortalphabetic,
]{amsrefs}

\newcommand{\labbel}{\label}

 
\newtheorem{theorem}{Theorem}[section]
\newtheorem{lemma}[theorem]{Lemma}

\newtheorem{proposition}[theorem]{Proposition} 
 
\newtheorem{corollary}[theorem]{Corollary}

\newtheorem*{theorem*}{Theorem}
\newtheorem*{corollary*}{Corollary}
\newtheorem*{proposition*}{Proposition}

\theoremstyle{definition}

\theoremstyle{remark}
\newtheorem{remark}[theorem]{Remark}

\newtheorem*{remark*}{Remark}




\DeclareMathOperator{\cf}{cf}

\DeclareMathOperator{\CAP}{CAP}

\begin{document}
 
\title
{On a theorem by Juh{\'a}sz and Szentmikl{\'o}ssy}

\author{Paolo Lipparini} 
\address{Nuevo Departamento de Matematica\\ Viale della Ricerca Scientifica\\II Universit\`a di Roma (Tor Vergata)\\I-00133 ROME ITALY}
\urladdr{http://www.mat.uniroma2.it/\textasciitilde lipparin}

\keywords{Juh{\'a}sz-Szentmikl{\'o}ssy-principle, $\kappa$-compactness, complete accumulation point, pseudocompact-like properties, uniform, $\kappa$-decomposable ultrafilter, product} 

\subjclass[2010]{54D20, 03E05, 54A20, 03E75}

\begin{abstract} 
We extend a theorem by 
Juh{\'a}sz and Szentmikl{\'o}ssy  to 
notions related to pseudocompactness.
We also allow the case
when one of the cardinals
under consideration  is singular.

We give an application to the study of decomposable ultrafilters:
if $\kappa$ is singular,
$D$ is a uniform ultrafilter over $ \kappa ^+ $, and
$D'$ is a uniform ultrafilter over $ \cf \kappa $,
then $D' \times D$ is $\kappa$-decomposable. 
\end{abstract}

\maketitle

\section{Introduction} \labbel{intro} 

Juh{\'a}sz and Szentmikl{\'o}ssy  \cite{JS} 
introduced the principle $\Phi( \mu, \kappa, \lambda )$, and used it 
to get some ``interpolation'' theorems for $\kappa$-compactness of
topological spaces; in more detail, 
they proved that if $\Phi( \mu, \kappa, \lambda )$ holds and a space is both 
$\mu$-compact and $\lambda$-compact, then it is also $\kappa$-compact.
They also obtained many further interesting consequences.

In what follows $\mu $, $\kappa$ and $\lambda$ shall always
assumed to be infinite cardinals; as usual,
 $[ S]^ {< \nu} = \{ Z \subseteq S \mid |Z| < \nu\}  $; similarly,
 $[ S] ^{ \nu} = \{ Z \subseteq S \mid |Z| = \nu\}  $. The principle $\Phi( \mu, \kappa, \lambda )$ introduced by
Juh{\'a}sz and Szentmikl{\'o}ssy  is the assertion that
there exists a family
$\{  S_ \xi \mid \xi < \lambda \}  \subseteq [  \kappa ] ^{ \mu} $
such that 
$|\{ \xi < \lambda \mid | A \cap S_ \xi| = \mu \}|< \lambda $,
for every $A \in [ \kappa ] ^{ < \kappa }$. 
Notice that Juh{\'a}sz and Szentmikl{\'o}ssy assumed that
$\lambda$ is regular in the above definition;
we shall not need this assumption.

In Section \ref{pseudo} we show, by a small modification of 
Juh{\'a}sz and Szentmikl{\'o}ssy  argument,  that, when $\lambda$ is not assumed
to be regular, the above mentioned result still holds under the additional assumption of 
$\cf\lambda$-compactness. 
Perhaps more significantly, we present a version 
which deals with properties connected with pseudocompactness. 

In Section \ref{decsec} we show that if $\Phi( \mu, \kappa, \lambda )$ holds,
$D'$ is a uniform ultrafilter over $\mu $, and 
$D$ is  a uniform ultrafilter over $\lambda$,
then $D \times D'$ is $\kappa$-decomposable, together with
some related results. We expect to be able to present
even deeper connections among Juh{\'a}sz and Szentmikl{\'o}ssy principle,
and regularity and decomposability of ultrafilters in the nearest future.

\section{$\kappa$-compactness relative to some family $\mathcal F$} \labbel{pseudo} 

First, some definitions are needed. 
We shall consider a topological space $X$ together 
with a family $\mathcal F$ of nonempty subsets of $X$.
This is done in order to treat 
simultaneously the following two main cases: 
$\mathcal F =\mathcal S$, the set of all singletons
of $X$, in which case we get notions and theorems related to
compactness, $\kappa$-compactness, \dots, 
and the case when 
$\mathcal F =  \mathcal O$, the set of all nonempty open
subsets of $X$, in which case we get notions related to pseudocompactness.
At first reading, the reader is advised  to consider
only the above examples (or the example he or she is more interested in).

If   
$X$ is a topological space,
and $\mathcal Y $ is an infinite subset of $ \mathcal P(X) $ 
 we say that  $x \in X$ is a
\emph{complete accumulation point}  
of
$\mathcal Y$ 
if  
$ | \{ Y  \in \mathcal Y \mid Y \cap U \not= \emptyset  \} |
=| \mathcal Y|$,
for every  neighborhood $U$ of $x$ in $X$. 
When all the members of $\mathcal Y$ are singletons, 
we get the usual notion of a \emph{complete accumulation point}
of a subset $Y$ of $X$ (where, in this case
 $Y =  \bigcup  \mathcal Y $).  
If $\mathcal F \subseteq \mathcal P(X)$, we say that 
$X$ is \emph{$\mathcal F$-$ \lambda $-compact}
if   every subfamily $ \mathcal Y \subseteq \mathcal F$ of cardinality $\lambda$ 
has a complete accumulation point.  
When $\mathcal F = \mathcal S$, this is called \emph{$ \lambda $-compactness},
and means that every subset of $X$ of cardinality
$\lambda$ has a complete accumulation point.
The other interesting case is when
$\mathcal F =   \mathcal O$.
Just to mention an example, for Tychonoff spaces,
$\mathcal O$-$\omega  $-compactness in the above sense
is equivalent to pseudocompactness.
We refer to 
\cite{tproc2}, in particular, Section 3,
for more details about 
$\mathcal F$-$ \lambda $-compactness
(called there $\mathcal F$-$\CAP _ \lambda $),
equivalent formulations, related notions,
references to the literature
as well as references to alternative terminology used in the literature.
Throughout the present section, we shall assume that $X$ is some fixed topological space,
and $\mathcal F$ is a fixed family such that $   \emptyset  \not= \mathcal F \subseteq \mathcal P(X) \setminus \{ \emptyset  \} $.
 
\begin{theorem} \labbel{js2}
Suppose that $\Phi( \mu, \kappa, \lambda )$ holds, 
that 
$X$ is $\mathcal F$-$ \mu $-compact and
$\lambda$-compact and, in case $\lambda$ is
singular, suppose further that $X$ is $\cf\lambda$-compact. 
Then $X$ is $\mathcal F$-$ \kappa $-compact.
\end{theorem}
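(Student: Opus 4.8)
The plan is to run the Juh\'asz--Szentmikl\'ossy argument, inserting an extra cofinality argument to handle the case of singular $\lambda$. First I would fix a subfamily $\mathcal Y \subseteq \mathcal F$ with $|\mathcal Y| = \kappa$ and enumerate it injectively as $\mathcal Y = \{ Y_\alpha \mid \alpha < \kappa\}$, and take a family $\{ S_\xi \mid \xi < \lambda\} \subseteq [\kappa]^\mu$ witnessing $\Phi(\mu,\kappa,\lambda)$. For each $\xi < \lambda$ the set $\{ Y_\alpha \mid \alpha \in S_\xi\}$ is a subfamily of $\mathcal F$ of cardinality $\mu$, so $\mathcal F$-$\mu$-compactness provides a complete accumulation point $x_\xi$ of it; that is, $|\{ \alpha \in S_\xi \mid Y_\alpha \cap U \neq\emptyset\}| = \mu$ for every neighbourhood $U$ of $x_\xi$. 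Writing $Z = \{ x_\xi \mid \xi < \lambda\}$ and $\theta_z = |\{\xi < \lambda \mid x_\xi = z\}|$ for $z \in Z$, one has $\sum_{z\in Z}\theta_z = \lambda$. The whole problem then reduces to finding a single point $x \in X$ with the property that every neighbourhood of $x$ contains $x_\xi$ for $\lambda$ many indices $\xi$.

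The reason such an $x$ does the job is the defining property of $\Phi(\mu,\kappa,\lambda)$. Given a neighbourhood $U$ of such an $x$, put $A = \{\alpha < \kappa \mid Y_\alpha \cap U \neq\emptyset\}$. For each of the $\lambda$ many $\xi$ with $x_\xi \in U$, the set $U$ is a neighbourhood of $x_\xi$, so $|A \cap S_\xi| = |\{\alpha\in S_\xi\mid Y_\alpha\cap U\neq\emptyset\}| = \mu$; hence $|\{\xi < \lambda \mid |A\cap S_\xi| = \mu\}| \geq \lambda$, and $\Phi(\mu,\kappa,\lambda)$ forces $A \notin [\kappa]^{<\kappa}$, i.e.\ $|A| = \kappa = |\mathcal Y|$. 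Since the enumeration of $\mathcal Y$ is injective, this means exactly that $x$ is a complete accumulation point of $\mathcal Y$, which is what we want.

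To produce $x$ I would distinguish three cases. If $\theta_{z^*} = \lambda$ for some $z^* \in Z$, then $x = z^*$ works at once. If every $\theta_z < \lambda$ but $|Z| = \lambda$, I would apply $\lambda$-compactness to $Z$ to obtain a complete accumulation point $x$ of $Z$; then $|Z\cap U| = \lambda$ for each neighbourhood $U$ of $x$, already exhibiting $\lambda$ many $\xi$ with $x_\xi \in U$. If every $\theta_z < \lambda$ and $|Z| < \lambda$, then $\lambda$ is necessarily singular (a sum of fewer than $\cf\lambda$ cardinals each below $\lambda$ stays below $\lambda$), so by hypothesis $X$ is $\cf\lambda$-compact. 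In this case $\{\theta_z \mid z \in Z\}$ must be cofinal in $\lambda$, so I can choose distinct points $z^i \in Z$ ($i < \cf\lambda$) with $\langle\theta_{z^i} \mid i < \cf\lambda\rangle$ strictly increasing and cofinal in $\lambda$; the set $I = \{z^i \mid i < \cf\lambda\}$ then has cardinality $\cf\lambda$, so $\cf\lambda$-compactness yields a complete accumulation point $x$ of $I$. For a neighbourhood $U$ of $x$, the set $J = \{i < \cf\lambda \mid z^i \in U\}$ has cardinality $\cf\lambda$ and so is cofinal in the regular cardinal $\cf\lambda$; the fibres $\{\xi \mid x_\xi = z^i\}$ for $i \in J$ are pairwise disjoint and their union has cardinality $\sum_{i\in J}\theta_{z^i} = \lambda$, so $U$ contains $x_\xi$ for $\lambda$ many $\xi$.

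The main obstacle is this last case. The point is that $\lambda$-compactness only constrains subsets of $X$ of size exactly $\lambda$; when the $x_\xi$ take fewer than $\lambda$ values and no single value occurs $\lambda$ times, one has to carve out of them a subset of size precisely $\cf\lambda$ on which to use $\cf\lambda$-compactness, and to do so in such a way that the resulting accumulation point still ``sees'' $\lambda$ many of the indices $\xi$ --- this is what forces us to line up the multiplicities $\theta_{z^i}$ into a cofinal increasing sequence before applying $\cf\lambda$-compactness. The remainder of the argument is routine bookkeeping with the injective enumeration of $\mathcal Y$ and with the contrapositive of the principle $\Phi(\mu,\kappa,\lambda)$.
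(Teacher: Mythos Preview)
Your proof is correct and follows essentially the same approach as the paper's: choose accumulation points $x_\xi$ for each $S_\xi$-slice using $\mathcal F$-$\mu$-compactness, then find a single point $x$ every neighbourhood of which captures $\lambda$ many of the $x_\xi$, handling the singular case via a cofinal-in-$\lambda$ sequence of fibre sizes and $\cf\lambda$-compactness. The only cosmetic difference is that you guarantee the $\cf\lambda$ many auxiliary points are distinct by choosing their multiplicities strictly increasing, whereas the paper chooses $p(\lambda_\gamma)$ first and then observes that the resulting set must have size $\cf\lambda$.
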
 
 
\begin{proof}
The proof follows essentially the lines of the proof 
of \cite[Theorem 2]{JS} with a small variation in the case $\lambda$ singular. 
Let
$\mathcal Y
\subseteq \mathcal F$ be a family 
of cardinality $ \kappa $ and
enumerate it as
 $ \{ Y _ \alpha  \mid  \alpha \in \kappa \} $
with all the $Y_ \alpha $'s
 distinct. Let  
$\{  S_ \xi \mid \xi < \lambda \} \subseteq [ \kappa ]^ \mu $
be given by $\Phi( \mu, \kappa, \lambda )$.
By $\mathcal F$-$ \mu $-compactness, 
for every $\xi < \lambda$, the family
 $ \{ Y _ \alpha  \mid  \alpha \in S_ \xi \} $ 
(of cardinality $\mu $)
has a complete accumulation point $p _ \xi$,
that is, 
$|\{ \alpha \in S_ \xi \mid Y_ \alpha \cap U \not= \emptyset  \}|= \mu$,
for every neighborhood $U$ of $p_ \xi$.

If $\lambda$ is regular, there are two cases:
(1) there is $p \in X$ such that 
$|\{ \xi < \lambda \mid p _ \xi =p\}| = \lambda $;
(2) $|\{ p _ \xi \mid \xi < \lambda \}| = \lambda $.
In this latter case, choose some complete accumulation point
$p$ of $\{ p _ \xi \mid \xi < \lambda \}$; the existence 
of such a $p$ is guaranteed
by $\lambda$-compactness.
If $\lambda$ is singular, a third case can occur:
(3) for every $\lambda' < \lambda $,
there is $p ( \lambda ') \in X$ such that 
$ \lambda ' \leq |\{ \xi < \lambda \mid p _ \xi =
p( \lambda ')\}| < \lambda $. In this case, fix
an increasing sequence 
$ (\lambda_ \gamma ) _{ \gamma \in \cf \lambda } $
cofinal in $\lambda$, and, for every 
$\gamma \in \cf \lambda$,
choose some 
$ p _ \gamma = p( \lambda _ \gamma )$ as above. 
 Then necessarily
$|\{ p _ \gamma  \mid \gamma \in \cf \lambda  \}| = \cf \lambda $,
and, by $\cf\lambda$-compactness,
the set $\{ p _ \gamma  \mid \gamma \in \cf \lambda  \}$ has a complete accumulation point
$p$.

In each of the above cases, for every neighborhood 
$U$ of $p$, we have that 
 $ | \{ \xi < \lambda |
 p_ \xi \in U
 \} | = 
\lambda $,
hence
 $ | \{ \xi < \lambda |
| A^U \cap S_ \xi  | = \mu
 \} | = 
\lambda $,
where we have put
$A^U =\{ \alpha \in \kappa  \mid Y_ \alpha \cap U \not= \emptyset  \}$.
Here we have used the fact that
if $p_ \xi \in U$, $U$ a neighborhood of $p$, then $U$ is also a neighborhood of $p_\xi$. 
We show that $p$ is a complete  accumulation point 
of $ \{ Y _ \alpha  \mid  \alpha \in \kappa \}$.
Suppose not.  Then, for some neighborhood $U$
of $p$,  $|A^U|
<\kappa$.  
Then 
$\Phi( \mu, \kappa, \lambda )$
implies that
$|\{ \xi < \lambda \mid | A^U \cap S_ \xi| = \mu \}|< \lambda $,
a contradiction.
 \end{proof}  

Notice that if in the above theorem $\lambda$ is a regular cardinal, 
and $\mathcal F$ is taken to be equal to $\mathcal S$,
then we get exactly the statement of \cite[Theorem 2]{JS}.
The arguments in the proof of Theorem \ref{js2} actually give the following stronger 
``local'' version. 

\begin{corollary} \labbel{local}
Suppose that $\Phi( \mu, \kappa, \lambda )$ holds, 
that 
$X$ is 
$\lambda$-compact and, in case $\lambda$ is
singular, suppose further that $X$ is $\cf\lambda$-compact. 

If $\mathcal Y \subseteq \mathcal P(X) $,
$|\mathcal Y|= \kappa $, and every subset of $\mathcal Y$
of cardinality $\mu $ has a  
 complete accumulation point,
then $\mathcal Y$ has a complete accumulation point.
In particular, if   $ Y \subseteq   X $,
$|Y|= \kappa $, and every subset of $ Y$
of cardinality $\mu $ has a  
 complete accumulation point,
then $  Y$ has a complete accumulation point.
 \end{corollary}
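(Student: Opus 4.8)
The plan is to observe that Corollary \ref{local} is essentially already proved inside the argument for Theorem \ref{js2}, once one notices that $\mathcal F$-$\mu$-compactness was used there in exactly one place: to produce, for each $\xi < \lambda$, a complete accumulation point $p_\xi$ of the subfamily $\{Y_\alpha \mid \alpha \in S_\xi\}$. So I would reorganize the proof of Theorem \ref{js2} to isolate that dependence and then feed in the weaker hypothesis.

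Concretely, I would start as in Theorem \ref{js2}: enumerate $\mathcal Y = \{Y_\alpha \mid \alpha \in \kappa\}$ with distinct $Y_\alpha$, and take a family $\{S_\xi \mid \xi < \lambda\} \subseteq [\kappa]^\mu$ witnessing $\Phi(\mu,\kappa,\lambda)$. For each $\xi<\lambda$ the set $S_\xi$ has cardinality $\mu$, hence the subfamily $\{Y_\alpha \mid \alpha \in S_\xi\}$ is a subset of $\mathcal Y$ of cardinality $\mu$, so by the hypothesis of the corollary it has a complete accumulation point $p_\xi$; that is, $|\{\alpha \in S_\xi \mid Y_\alpha \cap U \neq \emptyset\}| = \mu$ for every neighborhood $U$ of $p_\xi$. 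From this point on the argument is verbatim that of Theorem \ref{js2}: split into the cases according to whether $\lambda$ is regular or singular, use $\lambda$-compactness (and $\cf\lambda$-compactness in the singular case) to find a point $p$ with $|\{\xi<\lambda \mid p_\xi \in U\}| = \lambda$ for every neighborhood $U$ of $p$, deduce $|\{\xi<\lambda \mid |A^U \cap S_\xi| = \mu\}| = \lambda$ with $A^U = \{\alpha \in \kappa \mid Y_\alpha \cap U \neq \emptyset\}$, and conclude via $\Phi(\mu,\kappa,\lambda)$ that $|A^U| = \kappa$, so that $p$ is a complete accumulation point of $\mathcal Y$. The last sentence of the corollary is then the special case $\mathcal Y = \{\{y\} \mid y \in Y\}$, using the fact recorded in Section \ref{pseudo} that complete accumulation points of the family of singletons of $Y$ are exactly complete accumulation points of the set $Y$.

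There is really no new obstacle here; the only thing to be careful about is the bookkeeping when passing from "every subset of $\mathcal Y$ of size $\mu$ has a complete accumulation point" to "$\{Y_\alpha \mid \alpha \in S_\xi\}$ has a complete accumulation point" — namely, that $\{Y_\alpha \mid \alpha \in S_\xi\}$ genuinely has cardinality $\mu$ (which holds because the $Y_\alpha$ are distinct and $|S_\xi| = \mu$) and that it is indeed a subfamily of $\mathcal Y$. Everything else is a word-for-word repetition of the proof of Theorem \ref{js2}, so in writing this up I would simply say that the proof of Theorem \ref{js2} goes through unchanged after replacing the appeal to $\mathcal F$-$\mu$-compactness by the hypothesis, and not reproduce the case analysis in full.
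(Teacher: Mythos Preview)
Your proposal is correct and matches the paper's own treatment exactly: the paper gives no separate proof for Corollary \ref{local}, merely remarking that the arguments in the proof of Theorem \ref{js2} actually yield this stronger local version. Your observation that $\mathcal F$-$\mu$-compactness was invoked only to produce the points $p_\xi$, and that the corollary's hypothesis supplies them instead, is precisely the point.
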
  

Using the theorems and arguments in \cite{JS}, and by  Theorem \ref{js2},
we get the following corollary (notice that if $ \emptyset \not\in \mathcal F$,
then
$\kappa$-compactness trivially implies $\mathcal F$-$\kappa$-compactness).

\begin{corollary} \labbel{cor} 
If $X$ is  linearly Lindel\"of and 
$\mathcal F$-$ \aleph_ \omega $-compact, 
then $X$  is $\mathcal F$-$ \kappa $-compact,
for every uncountable cardinal $ \kappa $. 
\end{corollary}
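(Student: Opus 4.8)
The plan is to reduce the statement to the single genuinely nontrivial case, that in which $\kappa$ is singular, and there to invoke Theorem~\ref{js2} together with the instances of the principle $\Phi$ produced in \cite{JS} (which do not involve $\mathcal F$). Recall first that a topological space is linearly Lindel\"of if and only if it is $\lambda$-compact for every regular uncountable cardinal $\lambda$ (see \cite{tproc2}). Since we are assuming $\emptyset\notin\mathcal F$, $\lambda$-compactness implies $\mathcal F$-$\lambda$-compactness, as observed just before the statement; hence $X$ is automatically $\mathcal F$-$\lambda$-compact for every regular uncountable $\lambda$, and in particular the conclusion holds whenever $\kappa$ is regular. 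As the least uncountable singular cardinal is $\aleph_\omega$, and $\mathcal F$-$\aleph_\omega$-compactness is part of the hypothesis, it remains only to treat the case in which $\kappa>\aleph_\omega$ is singular.

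Fix such a $\kappa$ and put $\mu=\cf\kappa$ when $\cf\kappa>\omega$, and $\mu=\aleph_\omega$ when $\cf\kappa=\omega$. In either case $\cf\kappa\le\mu<\kappa$, and $X$ is $\mathcal F$-$\mu$-compact: when $\mu$ is regular uncountable this is the observation above, and when $\mu=\aleph_\omega$ it is the hypothesis. By the (pcf-theoretic) arguments of \cite{JS} there is, for such a pair $\mu<\kappa$, a regular cardinal $\lambda$ with $\Phi(\mu,\kappa,\lambda)$; such a $\lambda$ is necessarily uncountable, so $X$ is $\lambda$-compact by linear Lindel\"ofness, and, $\lambda$ being regular, no $\cf\lambda$-compactness hypothesis is required. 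Theorem~\ref{js2}, applied to this triple $\mu,\kappa,\lambda$, now gives that $X$ is $\mathcal F$-$\kappa$-compact, as wanted. (Alternatively one may run an induction on singular $\kappa$, using Corollary~\ref{local}, though here that is not necessary.)

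The step carrying the real weight is the appeal to \cite{JS} for the required instances of $\Phi$; in the case $\cf\kappa=\omega$ this rests on a genuine pcf construction, and it also explains why the hypothesis is $\mathcal F$-$\aleph_\omega$-compactness rather than, say, $\mathcal F$-$\aleph_0$-compactness: if $\cf\kappa=\omega$ then, writing $\kappa=\sup_n\kappa_n$ and testing the sets $A=\kappa_m$, one sees that $\Phi(\mu,\kappa,\lambda)$ with $\lambda$ regular uncountable forces $\cf\mu=\omega$, so the least usable uncountable $\mu$ is $\aleph_\omega$ (and no $\Phi(\mu,\aleph_\omega,\lambda)$ at all is available with $\mu<\aleph_\omega$ and $\lambda$ regular uncountable). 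The remaining ingredients---the characterization of linear Lindel\"ofness through $\lambda$-compactness at regular uncountable $\lambda$, and the passage from $\lambda$-compactness to $\mathcal F$-$\lambda$-compactness when $\emptyset\notin\mathcal F$---are routine.
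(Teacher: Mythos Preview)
Your argument is correct and is exactly the approach the paper has in mind: the paper merely says the corollary follows from Theorem~\ref{js2} together with the theorems and arguments of \cite{JS}, and you have spelled out precisely that reduction---handling regular $\kappa$ via linear Lindel\"ofness, and singular $\kappa>\aleph_\omega$ by taking $\mu=\cf\kappa$ (when $\cf\kappa>\omega$) or $\mu=\aleph_\omega$ (when $\cf\kappa=\omega$) and invoking $\Phi(\mu,\kappa,\kappa^+)$ from \cite[Theorems 4 and 5]{JS}. Your closing remarks on why $\aleph_\omega$ is the natural threshold are a nice bonus but not part of the proof proper.
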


\begin{remark} \labbel{rmk} 
Let us remark that in the present section we have used very little topology.
In all the above arguments (and in a large part of the paper by
Juh{\'a}sz and Szentmikl{\'o}ssy)
the only needed assumption is that 
to every point $x$ of $X$ there is associated a family
of ``neighborhoods'' with the only properties that
(1) each ``neighborhood'' of $x$ contains $\{ x \}$, and (2)  
if $U$ is a ``neighborhood'' of $x$, then
every $y \in U$ has some ``neighborhood''
contained in $U$.
In particular, we have never used the topological
property that the intersection of two neighborhoods
of $x$ is still a neighborhood of $x$.
\end{remark}

\section{Decomposability of products of ultrafilters} \labbel{decsec}

In this section  we  use $\Phi( \mu, \kappa, \lambda )$ to prove theorems 
about decomposability of ultrafilters. In fact, the proofs
work without changes for arbitrary families of subsets of some set, not even necessarily
being filters; so we state the results in such a generality.
Of course, the reader may always assume 
that $D$, $D'$ below are ultrafilters, and it might happen
this is the only interesting case.

We say that $D \subseteq \mathcal P ( I ) $ 
 is \emph{uniform over $I$} if every member of $D$ has cardinality
$|I|$.  Here $ \mathcal P (I)$ is the set of all subsets of $I$.
 We say that
$D$ is \emph{$\mu $-decomposable} if
 there exists a function $f:I \to \mu $ such that 
$f ^{-1} (Z) \not\in D$, whenever $Z\in [ \mu ] ^{< \mu} $.
Such an $f$ is called a \emph{$\mu $-decomposition} for $D$.  
Clearly, $f$ is  a $\mu $-decomposition  for $D$ if and only 
if $f(D)$ is uniform over $\mu $, 
where we put $f(D)=\{ Z \subseteq  \mu \mid f ^{-1} (Z) \in D \}$.  
In the above definitions, $\mu $ can be equivalently
replaced by any set of cardinality $\mu $.
Throughout the paper,  
$D $, $D'$ are assumed to be subsets of
 $   \mathcal P (I)$,   $   \mathcal P (I')$, respectively,

The next theorem exploits the connection
between $\Phi( \mu, \kappa, \lambda )$  and decomposability of families of sets.
As in the preceding section, we are not necessarily assuming that $\lambda$ is regular, though we do not know how much
this is an actual  gain in generality.
We define
the \emph{product} $D\times D' $
to be the subset of 
$ \mathcal P(I\times I')$
defined by:
$X\in D\times D'$ if and only if $ \{i\in I| \{i'\in I'|(i,i')\in X \}\in D'\}\in D $.
Of course, for ultrafilters, this coincides with a classical definition. 
See \cite{kat68} and further references there. 

\begin{theorem} \labbel{main} 
If $\Phi( \mu, \kappa, \lambda )$ holds,
$D'$ is  $\mu $-decomposable, and
$D$ is  $\lambda$-decomposable,
then $D \times D'$ is $\kappa$-decomposable. 
\end{theorem}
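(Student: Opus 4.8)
\emph{Proof idea.} The plan is to manufacture a $\kappa$-decomposition $F\colon I\times I'\to\kappa$ for $D\times D'$ directly from the witnessing data. Fix a $\mu$-decomposition $g\colon I'\to\mu$ for $D'$ and a $\lambda$-decomposition $h\colon I\to\lambda$ for $D$, and let $\{\,S_\xi\mid\xi<\lambda\,\}\subseteq[\kappa]^{\mu}$ be a family given by $\Phi(\mu,\kappa,\lambda)$. Since $|S_\xi|=\mu$ for each $\xi$, choose a bijection $s_\xi\colon\mu\to S_\xi$, and define $F(i,i')=s_{h(i)}(g(i'))$. Then $F(i,i')\in S_{h(i)}\subseteq\kappa$, so $F$ is a well-defined map into $\kappa$.

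Next I would unwind the definition of $D\times D'$ on a set of the form $F^{-1}(A)$ with $A\in[\kappa]^{<\kappa}$. For fixed $i\in I$, writing $\xi=h(i)$, one has $F(i,i')\in A$ iff $s_\xi(g(i'))\in A\cap S_\xi$ iff $g(i')\in Z_\xi$, where $Z_\xi:=s_\xi^{-1}(A\cap S_\xi)\subseteq\mu$; hence $\{\,i'\in I'\mid F(i,i')\in A\,\}=g^{-1}(Z_\xi)$, a set depending on $i$ only through $h(i)$. Therefore $\{\,i\in I\mid\{\,i'\in I'\mid F(i,i')\in A\,\}\in D'\,\}=h^{-1}(\Xi)$, where $\Xi:=\{\,\xi<\lambda\mid g^{-1}(Z_\xi)\in D'\,\}$. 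The structural point here — the thing that lets the argument run for arbitrary $D,D'$ with no closure hypotheses — is that this set is \emph{exactly} a preimage under $h$, not merely a subset of one.

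Finally I would bound $|\Xi|$. If $\xi\in\Xi$ then $g^{-1}(Z_\xi)\in D'$, so, $g$ being a $\mu$-decomposition of $D'$, we cannot have $Z_\xi\in[\mu]^{<\mu}$; since $Z_\xi\subseteq\mu$ this forces $|Z_\xi|=\mu$, i.e.\ $|A\cap S_\xi|=\mu$. Thus $\Xi\subseteq\{\,\xi<\lambda\mid|A\cap S_\xi|=\mu\,\}$, and as $A\in[\kappa]^{<\kappa}$ the principle $\Phi(\mu,\kappa,\lambda)$ gives $|\Xi|<\lambda$, that is $\Xi\in[\lambda]^{<\lambda}$. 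Since $h$ is a $\lambda$-decomposition of $D$ we conclude $h^{-1}(\Xi)\notin D$, i.e.\ $F^{-1}(A)\notin D\times D'$. As $A\in[\kappa]^{<\kappa}$ was arbitrary, $F$ is a $\kappa$-decomposition of $D\times D'$.

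I do not expect a serious obstacle: once $F$ is defined by routing $h$ and $g$ through the bijections $s_\xi$, everything reduces to the bookkeeping above. The only step demanding a moment's care is checking that the inner $D'$-membership condition depends on $i$ solely via $h(i)$, so that it collapses to a single $h$-preimage; this is what makes it unnecessary to assume any upward closure or filter property of $D$ or $D'$.
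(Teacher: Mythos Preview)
Your proof is correct and is essentially the same as the paper's: the paper first treats the special case where $D$ is uniform over $\lambda$ and $D'$ uniform over $\mu$, builds $f(\xi,\eta)=f_\xi(\eta)$ from bijections $f_\xi\colon\mu\to S_\xi$, and then handles the general case by precomposing with decompositions $g\colon I\to\lambda$, $g'\colon I'\to\mu$ to get $h(i,i')=f_{g(i)}(g'(i'))$---which is exactly your $F(i,i')=s_{h(i)}(g(i'))$ under a change of letters. You have simply merged the two stages into one pass, and your explicit remark that the inner $D'$-condition depends on $i$ only through $h(i)$ (so the relevant set is \emph{equal} to $h^{-1}(\Xi)$, not just contained in it) makes transparent why no filter hypothesis on $D$ or $D'$ is needed.
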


 \begin{proof}
We first prove the theorem in the particular case in which
$D'$ is  uniform  over $\mu $ and
$D$ is  uniform  over $\lambda$.
Let 
$\{  S_ \xi \mid \xi < \lambda \} $
be given by $\Phi( \mu, \kappa, \lambda )$ and,
for every $\xi < \lambda $,
choose a bijection $f _ \xi: \mu \to S_ \xi$. 
Since $D'$ is uniform over $\mu $,
$f_ \xi ^{-1}(C) \not\in D' $,
for every $\xi < \lambda $ and $C \in [S_ \xi] ^{<\mu} $.
Define $f: \lambda  \times \mu \to \kappa  $
by $f( \xi, \eta) = f_ \xi (\eta)$.
We claim that $f$ witnesses that    
$D \times D'$ is $\kappa$-decomposable.
Indeed, let 
$A \in [ \kappa ] ^{< \kappa } $.
By $\Phi( \mu, \kappa, \lambda )$,
$|\{ \xi < \lambda \mid | A \cap S_ \xi| = \mu \}|< \lambda $,
thus, from the above remark,  
$|\{ \xi < \lambda \mid  f_ \xi ^{-1}( A) \in D' \}|< \lambda $,
hence, since $D$ is uniform,
$\{ \xi < \lambda \mid  f_ \xi ^{-1}( A) \in D' \} \not \in D$,
that is, 
$\{ \xi < \lambda \mid  \{  \eta \mid  f(\xi, \eta) \in A\}  \in D' \} \not \in D$,
and this means exactly
$f ^{-1} (A) \not\in  D \times D'$.

Now consider the general case in which $D'$ is  $\mu $-decomposable, and
$D$ is  $\lambda$-decomposable.
Then there are $g:I \to \lambda $ and 
 $g':I' \to \mu $
such that 
$g(D)$ is uniform over $\lambda$ 
and $g'(D')$ is uniform over $\mu $.
Applying the above particular case to 
$g(D)$ and $g'(D')$,
we get some 
$f: \lambda  \times \mu \to \kappa  $
 witnessing that    
$g(D) \times g'(D')$ is $\kappa$-decomposable.
Then clearly the function
$h: I \times I' \to \kappa  $
defined by
$h(i,i')=f(g(i),g'(i'))$ 
witnesses that    
$D\times D'$ is $\kappa$-decomposable.
 \end{proof}  

The same proof as above provides a slightly more general result.
If
$D  \subseteq \mathcal P (I)$, and, for every $i \in I$,
$D_i  \subseteq \mathcal P (I_i)$,
the \emph{$D$-sum} $\sum_D D_i$ of the $D_i$'s
modulo $D$ is the   subset  of
$ \mathcal P( \{(i,j)|i\in I , j\in I_i  \})  $
defined by
$X\in \sum_D D_i$ if and only if $ \{i\in I| \{j \in I_i| (i,j)\in X    \} \in D_i       \} \in D $.

\begin{theorem} \labbel{sums} 
If $\Phi( \mu, \kappa, \lambda )$ holds,
$D  \subseteq \mathcal P (I)$ is   $\lambda$-decomposable 
and, 
 for every $i \in I$,
$D_i  $ is 
$\mu $-decomposable
(or just $\{ i \in I  \mid D_i  \text{ is 
$\mu $-decomposable}\} \in D$), then $\sum_D D_i$ is $\kappa$-decomposable. 
\end{theorem}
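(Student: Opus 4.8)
The plan is to imitate the proof of Theorem~\ref{main} almost verbatim, replacing the single factor $D'$ on the right-hand side of the product by the ``fiber'' filters $D_i$. First I would reduce to the uniform case exactly as before: since $D$ is $\lambda$-decomposable there is $g\colon I\to\lambda$ with $g(D)$ uniform over $\lambda$, and for each $i$ with $D_i$ $\mu$-decomposable there is $g_i\colon I_i\to\mu$ with $g_i(D_i)$ uniform over $\mu$ (for the indices $i$ outside the relevant set in $D$, which we may safely ignore, pick $g_i$ arbitrary). Pushing the $D$-sum forward along the map $(i,j)\mapsto(g(i),g_i(j))$ turns $\sum_D D_i$ into a $D$-sum of uniform filters modulo a uniform filter on $\lambda$, so it suffices to treat that case and then precompose with this pushforward, just as $h(i,i')=f(g(i),g'(i'))$ was built at the end of the proof of Theorem~\ref{main}.

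So assume $D\subseteq\mathcal P(\lambda)$ is uniform over $\lambda$ and each $D_i\subseteq\mathcal P(\mu)$ is uniform over $\mu$. Let $\{S_\xi\mid\xi<\lambda\}\subseteq[\kappa]^\mu$ be given by $\Phi(\mu,\kappa,\lambda)$ and choose bijections $f_\xi\colon\mu\to S_\xi$. Define $f\colon\{(\xi,\eta)\mid\xi<\lambda,\ \eta\in\mu\}\to\kappa$ by $f(\xi,\eta)=f_\xi(\eta)$; note the index set of the $D$-sum here is just $\lambda\times\mu$ since each $D_i$ lives on $\mu$. I claim $f$ is a $\kappa$-decomposition of $\sum_D D_i$. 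Given $A\in[\kappa]^{<\kappa}$, the principle $\Phi(\mu,\kappa,\lambda)$ gives $|\{\xi<\lambda\mid |A\cap S_\xi|=\mu\}|<\lambda$. Since $f_\xi$ is a bijection onto $S_\xi$, we have $|f_\xi^{-1}(A)|=|A\cap S_\xi|$, and as $D_\xi$ is uniform over $\mu$, $f_\xi^{-1}(A)\in D_\xi$ forces $|f_\xi^{-1}(A)|=\mu$; hence $\{\xi<\lambda\mid f_\xi^{-1}(A)\in D_\xi\}$ has size $<\lambda$, so it is not in the uniform filter $D$. Unwinding the definition of the $D$-sum, $\{\xi\mid f_\xi^{-1}(A)\in D_\xi\}=\{\xi\mid\{\eta\mid f(\xi,\eta)\in A\}\in D_\xi\}$, and this set failing to be in $D$ says precisely $f^{-1}(A)\notin\sum_D D_i$. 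Since $A$ was an arbitrary element of $[\kappa]^{<\kappa}$, $f$ is a $\kappa$-decomposition.

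The only genuinely new bookkeeping compared with Theorem~\ref{main} is handling the parenthetical hypothesis ``$\{i\in I\mid D_i$ is $\mu$-decomposable$\}\in D$'' rather than assuming all $D_i$ are $\mu$-decomposable; but since $D$ is a filter and membership in $\sum_D D_i$ only depends on what happens on a $D$-large set of indices, replacing the bad $D_i$'s by any fixed uniform ultrafilter on $\mu$ changes nothing, so this is a non-issue. I expect the main (very mild) obstacle to be purely notational: keeping the base set of $\sum_D D_i$ straight through the pushforward along $(i,j)\mapsto(g(i),g_i(j))$, i.e.\ checking that this map sends $D$-large sets to $g(D)$-sum-large sets and back, which is the same elementary verification as the last paragraph of the proof of Theorem~\ref{main}. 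No new idea beyond that proof is required.
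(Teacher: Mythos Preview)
Your approach is exactly what the paper intends: it merely says ``the same proof as above provides a slightly more general result,'' and your elaboration---reduce to the uniform case via decompositions $g$ and $g_i$, then run the $\Phi(\mu,\kappa,\lambda)$ argument with $f(\xi,\eta)=f_\xi(\eta)$---is the natural reading of that sentence.

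One small caveat: the pushforward of $\sum_D D_i$ along $(i,j)\mapsto(g(i),g_i(j))$ is \emph{not} literally a $g(D)$-sum of uniform families, because different $i$'s with the same $g(i)$ carry different $g_i(D_i)$'s; unlike in Theorem~\ref{main}, the outer set $\{i\mid g_i^{-1}(f_{g(i)}^{-1}(A))\in D_i\}$ is only a \emph{subset} of $g^{-1}(B)$ rather than equal to a $g$-preimage of a small set. Your conclusion that this set is not in $D$ therefore tacitly uses that $D$ is upward closed (as does your handling of the parenthetical hypothesis, where you explicitly say ``since $D$ is a filter''). This is harmless for the intended applications, and the paper is equally loose on this point, but strictly speaking it is a mild extra assumption beyond the paper's stated generality of arbitrary $D\subseteq\mathcal P(I)$.
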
  

\begin{corollary} \labbel{l+} 
If $\kappa$ is singular,
$D'$ is  $ \cf \kappa $-decomposable, and
$D$ is  $ \kappa ^+$-decomposable,
then $D \times D'$ is $\kappa$-decomposable. 
\end{corollary}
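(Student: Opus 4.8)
The plan is to deduce the corollary from Theorem~\ref{main}. I would apply that theorem with $\mu = \cf \kappa$ and $\lambda = \kappa ^+$: the two decomposability hypotheses then match verbatim, so the entire statement reduces to the combinatorial fact that $\Phi( \cf \kappa, \kappa, \kappa ^+ )$ holds for \emph{every} singular $\kappa$. (One could equally funnel the argument through Theorem~\ref{sums}, taking all the $D_i$ equal to $D'$, but this yields nothing extra here.) Thus the content to be supplied is: if $\kappa$ is singular, then $\Phi( \cf \kappa, \kappa, \kappa ^+ )$ holds.

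For that, writing $\theta = \cf \kappa$, I would invoke PCF theory to fix a strictly increasing sequence $\langle \kappa_i \mid i < \theta \rangle$ of regular cardinals cofinal in $\kappa$ together with a scale of length $\kappa ^+$ in $\prod_{i < \theta} \kappa_i$, i.e.\ a sequence $\langle f_ \alpha \mid \alpha < \kappa ^+ \rangle$ of functions $f_ \alpha \in \prod_{i<\theta} \kappa_i$ that is $<^*$-increasing and $<^*$-cofinal, where $f <^* g$ means that $\{ i < \theta \mid f(i) \ge g(i) \}$ is bounded in $\theta$. I would then split $\kappa$ into pairwise disjoint pieces $I_i$ with $|I_i| = \kappa_i$, fix bijections $e_i \colon \kappa_i \to I_i$, and set $S_ \alpha = \{ e_i( f_ \alpha(i)) \mid i < \theta \}$; disjointness of the $I_i$ gives $|S_ \alpha| = \theta$, and $f_ \alpha$ can be read back off $S_ \alpha$, so $\alpha \mapsto S_ \alpha$ is injective and $\{ S_ \alpha \mid \alpha < \kappa ^+ \} \subseteq [ \kappa ]^ \theta$. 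To check the defining property of $\Phi$, take $A \in [ \kappa ]^{< \kappa }$, put $B_i = e_i ^{-1}( A \cap I_i )$, and choose $i_0$ with $\kappa_{i_0} > |A|$; for $i \ge i_0$ one has $|B_i| < \kappa_i$, so regularity of $\kappa_i$ gives $\sup B_i < \kappa_i$, and I can form $h \in \prod_{i<\theta} \kappa_i$ with $h(i) = \sup(B_i)+1$ for $i \ge i_0$ (and $h(i)=0$ otherwise). The key observation is that whenever $h <^* f_ \alpha$ we get $f_ \alpha(i) \notin B_i$, hence $e_i( f_ \alpha(i)) \notin A$, for all large $i$; since the points $e_i(f_\alpha(i))$ ($i<\theta$) are distinct, this forces $|A \cap S_ \alpha| < \theta$. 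Consequently $\{ \alpha < \kappa ^+ \mid |A \cap S_ \alpha| = \theta \}$ is contained in $\{ \alpha \mid h \not<^* f_ \alpha \}$, which is an initial segment of $\kappa ^+$ because $\langle f_ \alpha \rangle$ is $<^*$-increasing, and a \emph{proper} initial segment because $\langle f_ \alpha \rangle$ is $<^*$-cofinal (so some $f_ \alpha$ dominates $h$); hence it has size $\le \kappa < \kappa ^+$, which is exactly what $\Phi( \theta, \kappa, \kappa ^+ )$ demands.

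The hard part is the PCF input: that for a suitably chosen cofinal sequence $\langle \kappa_i \rangle$ there exists a scale of length \emph{exactly} $\kappa ^+$ (equivalently, $\prod_{i<\theta} \kappa_i$ modulo the ideal of bounded subsets of $\theta$ has true cofinality $\kappa ^+$). Everything else — choosing the partition, the cardinality bookkeeping, the elementary manipulation with $h$ — is routine; what actually does the work is precisely that the scale is $<^*$-\emph{cofinal} and not merely $<^*$-increasing, for that is what keeps the exceptional set of $\alpha$'s below $\kappa ^+$. For $\cf \kappa = \omega$ this existence is Shelah's classical theorem, and it is available in ZFC for every singular $\kappa$; in a polished write-up one might simply quote this instance of $\Phi$ as known and then let Theorem~\ref{main} finish the argument in one line.
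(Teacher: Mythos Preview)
Your approach is exactly the paper's: apply Theorem~\ref{main} with $\mu=\cf\kappa$ and $\lambda=\kappa^+$, reducing everything to $\Phi(\cf\kappa,\kappa,\kappa^+)$. The paper simply cites this instance of $\Phi$ as \cite[Theorem~4]{JS} rather than reproving it; your PCF/scale sketch is correct and is essentially the argument given there, so your final remark that ``one might simply quote this instance of $\Phi$ as known'' is precisely what the paper does.
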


 \begin{proof} 
Immediate from Theorem \ref{main} and 
and Juh{\'a}sz and Szentmikl{\'o}ssy Theorem 4
in \cite{JS}, asserting that
    $\Phi( \cf \kappa , \kappa, \kappa ^+ )$ holds. 
\end{proof} 

Notice that  Juh{\'a}sz and Szentmikl{\'o}ssy also proved that,
for example, $\Phi( \mu , \kappa, \kappa ^+ )$ holds
whenever $\cf \kappa = \cf \mu < \mu < \kappa $.
See \cite[Theorem 5]{JS}. However 
this adds nothing to the  theorems of the present section, since 
$\mu $-decomposability implies $\cf \mu $-decomposability.

\section{Further Remarks} \labbel{furth} 

We add a very simple observation, which nevertheless might be of some interest.
It elaborates on a classical argument, which dates back at least to Kat{\v{e}}tov \cite{kat68}. See the last lines on p. 173 therein.

If $D \subseteq \mathcal P(I)$,
$X$ is a topological space,
and $(F_i) _{i \in I} $  is a sequence of subsets of $X$,
then a point $p \in X$ is said to be a 
\emph{$D$-limit point}
(or a \emph{$D$-accumulation point}) of
$(F_i) _{i \in I} $
if 
$\{ i \in I \mid U \cap F_i \not= \emptyset \} \in D$,
for every neighborhood $U$ of $p$. In case each 
$F_i $  is a singleton $ \{ x_i\} $, we shall 
simply say that $p$ is  a $D$-limit point of $(x_i) _{i \in I} $. 
In this situation, it is sometimes said
 that $(x_i) _{i \in I} $
\emph{$D$-converges} to $p$.

If $\mathcal F \subseteq \mathcal P(X)$,  
 we say that $X$ is \emph{$\mathcal F$-$D$-compact}
(or that $X$ satisfies the \emph{$\mathcal F$-$D$-accumulation property})
if,  for every sequence
$(F_i) _{i \in I} $ of (not necessarily distinct) members of $\mathcal F$,
there is $p \in X$ which is a $D$-limit point of $(F_i) _{i \in I} $.
When $\mathcal F = \mathcal S$ this is simply called
\emph{$D$-compactness};    
when $\mathcal F = \mathcal O$ this is   called
\emph{$D$-pseudocompactness} (sometimes, \emph{$D$-feeble 
compactness}). 
Again, see \cite{tproc2} for further comments and references.
The above definitions are usually given only in the particular case when $D$
is an ultrafilter, but here, as in Section \ref{decsec}, we shall need no particular assumption 
on $D$.

\begin{lemma} \labbel{lem1} 
Suppose that $D \subseteq \mathcal P(I) $, that
 $D_i \subseteq \mathcal P(I_i) $, for each $i \in I$, 
and that $(F _{i, j} ) _{i \in I, j \in I_i} $
is a sequence of  subsets of $X$. If, 
for each $i \in I$, the subsequence
 $(F _{i, j} ) _{ j \in I_i} $ has some $D_i$-limit point $p_i$,
 and if the sequence $(p_i) _{i \in I} $  has a $D$-limit point
$p$, then $p$
is also a $\sum_D D_i$-limit point of 
 $(F _{i, j} ) _{ j \in I_i} $.
\end{lemma}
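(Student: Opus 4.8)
The plan is to unwind the definitions of $D$-limit point and of the $D$-sum $\sum_D D_i$, and then verify the required membership condition directly. Fix an arbitrary neighborhood $U$ of $p$ in $X$; I must show that the set
\[
W \;=\; \bigl\{\, (i,j) \;:\; i \in I,\ j \in I_i,\ U \cap F_{i,j} \neq \emptyset \,\bigr\}
\]
belongs to $\sum_D D_i$. By the definition of the $D$-sum, this means exactly that
\[
\bigl\{\, i \in I \;:\; \{\, j \in I_i \;:\; (i,j) \in W \,\} \in D_i \,\bigr\} \in D,
\]
i.e. that $\{\, i \in I \;:\; \{\, j \in I_i : U \cap F_{i,j} \neq \emptyset \,\} \in D_i \,\} \in D$.

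The key step is to observe that for each $i \in I$ for which $p_i$ is a $D_i$-limit point of $(F_{i,j})_{j \in I_i}$, if $U$ happens also to be a neighborhood of $p_i$, then by definition $\{\, j \in I_i : U \cap F_{i,j} \neq \emptyset \,\} \in D_i$. Now since $p$ is a $D$-limit point of $(p_i)_{i \in I}$ and $U$ is a neighborhood of $p$, we have $\{\, i \in I : p_i \in U \,\} \in D$. For each $i$ in this set, $U$ is a neighborhood of $p$ containing $p_i$; exactly as in the proof of Theorem \ref{js2}, $U$ is then a neighborhood of $p_i$ as well (this uses only property (2) of the ``neighborhoods'' from Remark \ref{rmk}). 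Hence for every $i$ with $p_i \in U$ we get $\{\, j \in I_i : U \cap F_{i,j} \neq \emptyset \,\} \in D_i$. Therefore
\[
\{\, i \in I : p_i \in U \,\} \;\subseteq\; \bigl\{\, i \in I : \{\, j \in I_i : U \cap F_{i,j} \neq \emptyset \,\} \in D_i \,\bigr\},
\]
and since the smaller set is in $D$ and $D$ is closed under supersets (or, if $D$ is merely a subset of $\mathcal P(I)$, one can instead phrase everything so that membership of a subset suffices — but in the intended ultrafilter/filter case upward closure is what is used), the larger set is in $D$, which is precisely what was needed.

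I expect the only subtle point to be the passage ``$U$ is a neighborhood of $p$ and $p_i \in U$, hence $U$ behaves like a neighborhood of $p_i$'': this is not automatic in the weak ``neighborhood'' setting of Remark \ref{rmk}, but property (2) there — every point of $U$ has a ``neighborhood'' contained in $U$ — is exactly designed so that a $D_i$-limit point computed with respect to genuine neighborhoods still yields the desired conclusion for the set $U$, since $U\cap F_{i,j}\neq\emptyset$ whenever some sub-``neighborhood'' of $U$ meets $F_{i,j}$. Once this is noted, the rest is a routine chain of set inclusions and applications of the definitions, with no cardinality or compactness input required; the lemma is, as the text says, essentially just bookkeeping on top of the classical Kat\v{e}tov argument.
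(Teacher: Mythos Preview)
Your proof is correct and follows exactly the paper's route: fix a neighborhood $U$ of $p$, use the $D$-limit hypothesis on $(p_i)_{i\in I}$ to get $\{i:p_i\in U\}\in D$, observe that $U$ is then a neighborhood of each such $p_i$, and invoke the $D_i$-limit hypothesis to get the fiber condition. The paper compresses this into two sentences and makes the same tacit passage from $\{i:p_i\in U\}$ to the (possibly larger) set required by the definition of $\sum_D D_i$ that you explicitly flag.
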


 \begin{proof}
Applying the definitions, we have that
$\{  j \in I_i \mid U \cap F _{i, j} \not= \emptyset \} \in D_i$,
for every $i \in I$ and
 every neighborhood $U_i$ of $p_i$; moreover, 
$\{ i \in I \mid p_i \in U \} \in D$, 
for every neighborhood $U$ 
of $p$.
The above 
statements, together
with the fact that if $p_i \in U$, then $U$ is also a neighborhood of $p_i$,
 give $ \{(i,j)|i\in I , j\in I_i,  U \cap F _{i, j} \not= \emptyset \} 
\in \sum_D D_i $.
 \end{proof}

\begin{proposition} \labbel{prop}
If 
 $X$ is both
$D$-compact and 
$\mathcal F$-$D_i$-compact,
for every $i \in I$,
then $X$ is 
$\mathcal F$-$\sum_D D_i$-compact.

In particular, if 
 $X$ is  
$D$-compact and 
$\mathcal F$-$D'$-compact,
then $X$ is 
$\mathcal F$-$D \times D'$-compact.
 \end{proposition}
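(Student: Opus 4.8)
The plan is to reduce Proposition~\ref{prop} directly to Lemma~\ref{lem1}. Given a sequence $(G_{(i,j)})_{(i,j)}$ indexed by $\{(i,j)\mid i\in I,\ j\in I_i\}$ of members of $\mathcal F$, set $F_{i,j}=G_{(i,j)}$. First I would use $\mathcal F$-$D_i$-compactness of $X$: for each fixed $i\in I$, the subsequence $(F_{i,j})_{j\in I_i}$ consists of members of $\mathcal F$, so it has some $D_i$-limit point $p_i\in X$. Collecting these, I get a sequence $(p_i)_{i\in I}$ of points of $X$ — equivalently, a sequence of singletons $(\{p_i\})_{i\in I}$. Then I would invoke $D$-compactness of $X$, i.e.\ $\mathcal S$-$D$-compactness, applied to $(p_i)_{i\in I}$, to obtain a $D$-limit point $p$ of that sequence. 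At this point all the hypotheses of Lemma~\ref{lem1} are in place, so the lemma immediately yields that $p$ is a $\sum_D D_i$-limit point of $(F_{i,j})_{i\in I,\ j\in I_i}$, which is exactly what $\mathcal F$-$\sum_D D_i$-compactness demands for the originally given sequence.

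For the ``in particular'' clause I would specialize to the case where all $I_i$ equal a single set $I'$ and all $D_i$ equal a single family $D'$; then $\sum_D D_i$ is just $D\times D'$ by the definitions in Section~\ref{decsec} (the sum over a constant family of domains and filters is the product), and $\mathcal F$-$D_i$-compactness for every $i$ reduces to the single hypothesis $\mathcal F$-$D'$-compactness. One small point worth stating explicitly: the underlying index set $\{(i,j)\mid i\in I,\ j\in I_i\}$ becomes $I\times I'$, and an arbitrary sequence of members of $\mathcal F$ indexed by $I\times I'$ is exactly what one feeds into the general statement. So the second assertion is a direct instantiation of the first with no extra work.

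I do not expect a serious obstacle here; the content is entirely carried by Lemma~\ref{lem1}, and the proposition is the expected ``existence'' consequence of the ``local'' lemma, in the same way that Theorem~\ref{js2} follows from the accumulation-point arguments. The only thing to be mildly careful about is the bookkeeping: making sure that ``$D$-compact'' is read as $\mathcal S$-$D$-compact so that it applies to the sequence $(p_i)_{i\in I}$ of points (rather than to a sequence of members of $\mathcal F$), and making sure the identification of $\sum_D D_i$ with $D\times D'$ in the constant case is noted. Both are routine given the definitions already set up in Section~\ref{decsec} and the present section.
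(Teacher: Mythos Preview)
Your proposal is correct and matches the paper's proof essentially verbatim: obtain $D_i$-limit points $p_i$ via $\mathcal F$-$D_i$-compactness, then a $D$-limit point $p$ of $(p_i)_{i\in I}$ via $D$-compactness, and conclude by Lemma~\ref{lem1}; the ``in particular'' clause is likewise handled by specializing to constant $D_i=D'$.
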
 

 \begin{proof} 
The second statement is a particular case of the first one, so let us prove the first statement.
Suppose that $(F _{i, j} ) _{i \in I, j \in I_i} $
is a sequence of elements of $\mathcal F$. 
By  $\mathcal F$-$D_i$-compactness, for every $i \in I$, the subsequence
 $(F _{i, j} ) _{ j \in I_i} $ has some
$D_i$-limit point 
$p_i $.
By $D$-compactness,  the sequence
$(p_i) _{i \in I} $, has a $D$-limit point, and we are done by the previous lemma.
\end{proof} 

Notice that Remark \ref{rmk} applies to  Lemma \ref{lem1} and Proposition \ref{prop}, too. 

\begin{remark} \labbel{apprec}   
Proposition \ref{prop} can be used to appreciate the connections
between Theorem \ref{js2}  and Theorem \ref{main}. 
Indeed, suppose that $ \lambda $ is a regular cardinal. Then
it is easy to see that $ \lambda $-compactness is equivalent to 
  $D$-compactness, for $D= [ \lambda ] ^{ \lambda } $.
 Similarly, if also $\mu $ is regular, then
$ \mu $-compactness is equivalent to 
  $D'$-compactness, for $D'= [ \mu] ^{ \mu } $.
From Theorem \ref{main} we get that if
$\Phi( \mu, \kappa, \lambda )$ holds,
then $D \times D'$ is $\kappa$-decomposable, that is,
 $D''=f(D \times D')$ is uniform over $\kappa$, for some appropriate
function $f$.
If, under the above assumptions,  $X$ is both
$\lambda$-compact and $\mu $-compact, then,
by Proposition \ref{prop}, $X$ is also
$D \times D'$-compact, and it is trivial to see that
then $X$ is also $D''$-compact, and this implies that
$X$  is $\kappa$-compact.

More generally, for $\mu $ regular, 
$\mathcal F$-$ \mu$-compactness is equivalent to 
$\mathcal F$-$D'$-compactness, for the same $D'$ as above, and
all the above arguments work for arbitrary $\mathcal F$. 
Hence, in the case when $\mu $ is regular, 
Theorem \ref{js2} is in fact a corollary of Theorem \ref{main}
and of Proposition \ref{prop}.
We do not need the assumption that
 $ \lambda  $ is regular, since,
in the case when $\lambda$ is singular,   $D$-compactness (where, 
as above, $D= [ \lambda ] ^{ \lambda  } $) is equivalent to the conjunction 
of both $ \lambda  $-compactness and of $\cf \lambda  $-compactness (see
\cite[Proposition 3.3]{tproc2}). 

On the other hand, 
some technical differences arise when $\mu $ is singular.
 Indeed, notice that Juh{\'a}sz and Szentmikl{\'o}ssy \cite{JS}
give significant applications of 
$\Phi( \mu, \kappa, \lambda )$
also in the case when $\mu $ is singular,
$\mu $-compactness holds, but  $\cf\mu $-compactness does not.
Compare also Corollary \ref{cor} in the present paper. 
 \end{remark}

\def\cprime{$'$} \def\cprime{$'$}
\begin{bibdiv}
\begin{biblist}

\bib{JS}{article}{
      author={Juh{\'a}sz, Istv{\'a}n},
      author={Szentmikl{\'o}ssy, Zolt{\'a}n},
       title={Interpolation of {$\kappa$}-compactness and {PCF}},
        date={2009},
        ISSN={0010-2628},
     journal={Comment. Math. Univ. Carolin.},
      volume={50},
      number={2},
       pages={315\ndash 320},
}

\bib{kat68}{article}{
      author={Kat{\v{e}}tov, Miroslav},
       title={Products of filters},
        date={1968},
        ISSN={0010-2628},
     journal={Comment. Math. Univ. Carolin.},
      volume={9},
       pages={173\ndash 189},
}

\bib{tproc2}{article}{
      author={Lipparini, Paolo},
       title={Some compactness properties related to pseudocompactness and
  ultrafilter convergence},
        date={2012},
        ISSN={0146-4124},
     journal={Topology Proc.},
      volume={40},
       pages={29\ndash 51},
}

\end{biblist}
\end{bibdiv}

\end{document}